\newtheorem{proposition}{Proposition}
\title{\LARGE \bf
Multivariable simultaneous stabilization:
A modified Riccati approach
}
\author{Yufang Cui$^{1}$,  and Anders Lindquist$^{2}$
	\thanks{$^{1}$Department of Automation, Shanghai
		Jiao Tong University, Shanghai, China. {\tt\small cui-yufang@sjtu.edu.cn}}%
	\thanks{$^{2}$Department of Automation and School of Mathematical Sciences, Shanghai
		Jiao Tong University, Shanghai, China. {\tt\small alq@math.kth.se}}%
}
\begin{document}

\maketitle
\thispagestyle{empty}
\pagestyle{empty}

\begin{abstract}

Simultaneous stabilization problem arises in various systems and control applications. This paper introduces a new approach to addressing this problem in the  multivariable scenario, building upon our previous findings in the scalar case. The method utilizes a Riccati-type matrix equation known as the Covariance Extension Equation, which yields all solutions parameterized in terms of a matrix polynomial. The procedure is demonstrated through specific examples.

\end{abstract}

\section{Introduction}\label{Introduction}
To present and analyze the issues addressed in this paper, it is necessary to establish the following notation.
\begin{align*}
&\mathbb{C} : \text{the complex plane} \\
&\mathbb{R} : \text{the real line} \\
&\mathbb{C_{-}} : \text{open left half of the complex plane} \\
&\mathbb{C_{+}}: \{ \mathbb{C} - \mathbb{C_{-}} \} \cup \{\infty\} \\
&\mathbb{H} : \text{ring of proper rational functions with real}\\
&\qquad\text{co-efficients with poles in } \mathbb{C_{-}} \\
&\mathbb{H}^{p \times m} : \text{set of } p \times m \text{ matrices whose elements belong to } \mathbb{H} \\
&\mathbb{J} : \text{set of multiplicative units in } \mathbb{H} 
\end{align*}

In the realm of control systems engineering, the challenge of achieving stability in multiple systems through a single controller is a topic of significant interest and practical importance \cite{Blondel,Nett,Chaoui}.  This challenge is encapsulated in the concept of simultaneous stabilization, a pivotal area of study that has garnered considerable attention in both theoretical and applied research \cite{Smith,Wang}. 

Simultaneous stabilization problem involves the identification of a singular controller capable of stabilizing multiple plants  \cite{Vidyasagar,Youla}. In our previous paper \cite{CL_simul}, we addressed the simultaneous stabilization problem for single-input single-output (SISO) systems, which is: Given a family of SISO proper transfer functions denoted as $p_\lambda(s)$ and expressed as:
\begin{equation}\label{systems}
	p_\lambda(s)=\frac{\lambda x_1(s)+(1-\lambda)x_0(s)}{\lambda y_1(s)+(1-\lambda)y_0(s)}
\end{equation}
where $\lambda$ is a parameter in the interval $[0,1]$, and $x_0(s),y_0(s),x_1(s),y_1(s)\in \mathbb{H}$, the objective is to determine a proper compensator $k(s)$ so that the closed-loop systems $p_\lambda(s)(1+k(s)p_\lambda(s))^{-1}$  remain stable for all $\lambda$ in the interval $[0,1]$.

As a generalization of the SISO case, the multi-input and multi-output (MIMO) simultaneous stabilization problem solves the following problem: Suppose there is a family of plants $P_\lambda(s)$ as follows:
\begin{equation}\label{MIMO}
    P_\lambda(s)=N_\lambda(s)D_\lambda(s)^{-1}
\end{equation}
where $\lambda\in[0,1]$ and 
\begin{equation}
    N_\lambda=\lambda N_1+(1-\lambda)N_0
\end{equation}
\begin{equation}
    D_\lambda=\lambda D_1+(1-\lambda)D_0
\end{equation}
with $N_0,N_1,D_0,D_1 \in \mathbb{H}^{m \times m}$, find a proper $m\times m$ compensator $K(s)$ which stabilizes all $P_\lambda(s)$.

In \cite{Ghosh,Ghosh2,Ghosh3}, BK Ghosh concentrated on addressing the simultaneous partial pole placement problem and introduced an interpolation method to solve this problem, offering a fresh perspective for resolving the simultaneous stabilization problem. In this paper, we employ a more comprehensive interpolation approach based on our prior research on a Riccati-type method for analytic interpolation \cite{CLtac, CLcdc}, which is built upon algorithms for the partial stochastic realization problem \cite{b2,BGuL,b1,BLpartial} and on \cite{b6}. Specifically, we convert the simultaneous stabilization problem into a matrix-valued analytic interpolation problem, which can be generally formulated as follows:  Find a real rational Carath\'eodory function $F$ of size $\ell\times \ell$, i.e., a function $F$ that is analytic within the unit disc $\mathbb{D}=\{z\mid |z|<1\}$, and satisfies the inequality condition
\begin{equation}
\label{F+F*}
F(z)+F(z)^{*}  > 0, \quad z\in\mathbb{D},
\end{equation}
and also fulfills the interpolation conditions
\begin{align}
\label{interpolation}
 \frac{1}{j!}F^{(j)}(z_{k})=W_{kj},\quad& k=0,1,\cdots,m,   \\
    &   j=0,\cdots n_{k}-1 ,\notag
\end{align}
where $'$ denotes transposition, $F^{(j)}(z)$ is the $j$th derivative of $F(z)$, and $z_0,z_1,\dots,z_m$ are distinct points in $\mathbb{D}$ and $W_{kj}\in\mathbb{C}^{\ell\times\ell}$ for each $(k,j)$. The complexity of the rational function $F(z)$ is constrained by limiting its McMillan degree to be at most $\ell n$, where
\begin{equation}
\label{deg(f)}
n=\sum_{k=0}^{m}n_k -1 .
\end{equation}

This problem has an infinite number of solutions. However, as we will demonstrate in Section~\ref{sec:CEE}, the freedom to select the $n\times n$ parameter $\Sigma$ enables us to adjust the solution to specific requirements.

The paper is organized as follows: Section~\ref{sec:3condition} outlines the essential conditions necessary for a group of plants to be simultaneously stabilizable and illustrates how the simultaneous stabilization problem can be converted into an analytic interpolation problem. Section~\ref{sec:CEE} delves into solving the analytic interpolation problem using the Covariance Extension Equation (CEE). In Section~\ref{sec:applications}, we present simulations to illustrate how the method can be utilized to stabilize multiple plants. Finally, in Section~\ref{sec:conclusion}, we provide concluding remarks and recommendations for future research.

\section{The simultaneous stabilization problem}\label{sec:3condition}

As explained in \cite{Ghosh,Ghosh2,Ghosh3}, every SISO system can be written as $x(s) / y(s)$, where $x(s), y(s) \in \mathbb{H}$, and an $m \times m$  plant $P(s)$ has the left coprime representation  $D_l(s)^{-1} N_l(s)$ and the right coprime representation $N_r(s)D_r(s)^{-1}$, where $N_l(s),D_l(s),N_r(s),D_r(s) \in \mathbb{H}^{m \times m}$.

To solve the simultaneous stabilization problem, we firstly consider a simple case: Given two different plants 
\begin{equation}
  P_i(s)= N_i(s)D_i(s)^{-1}, \quad i=0, 1  
\end{equation}
where $N_i(s) \in \mathbb{H}^{m \times m}, D_i(s) \in \mathbb{H}^{m \times m}$, for $i=0,1$, find a proper $m\times m$ compensator $K(s)$ which can stabilize $P_0(s)$ and $P_1(s)$.

Set
\begin{equation}
  M(s)=
    \begin{bmatrix}
	N_0(s) & N_1(s) \\
	D_0(s) & D_1(s)
\end{bmatrix},
\end{equation}
and let $\text{Adj}(M(s))$ be its adjoint matrix. Moreover, suppose $\det(M(s))$ has simple zeros in $\mathbb{C_+}$ at $s_1,\cdots,s_t$ and $\det(M(\infty))\neq 0$.
\begin{proposition}
	The two plants $P_0,P_1$ can be simultaneously stabilized by a proper compensator if and only if there exists $\Delta_i(s) \in \mathbb{H}^{m \times m}$, $\operatorname{det} \Delta_i(s) \in \mathbb{J}, i=0, 1$ , such that if $s_{1},s_2$, $\cdots, s_{t}$ are the zeros of $\det(M)$ in $\mathbb{C_+}$, then 
	\begin{equation}
	    \begin{bmatrix}
		\Delta_0(s)&\Delta_1(s)
	\end{bmatrix}\text{Adj}(M(s))=\mathbf{0}
	\end{equation}
	at $s_{1},s_2$, $\cdots, s_{t}$.	
\end{proposition}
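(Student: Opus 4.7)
My plan is to use the Vidyasagar coprime-factor criterion: writing $K = Y^{-1} X$ with $X, Y \in \mathbb{H}^{m \times m}$ left coprime, $K$ stabilizes $P_i = N_i D_i^{-1}$ if and only if $\Delta_i := X N_i + Y D_i$ is unimodular in $\mathbb{H}^{m \times m}$, i.e., $\det \Delta_i \in \mathbb{J}$. The bridge to the statement is the bookkeeping identity $[X, Y]\, M = [\Delta_0, \Delta_1]$ combined with the cofactor identity $M\,\text{Adj}(M) = \det(M)\, I$, which together let me trade compensator data $(X,Y)$ for closure data $(\Delta_0,\Delta_1)$.

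For necessity I would take any simultaneously stabilizing $K = Y^{-1} X$, set $\Delta_i := X N_i + Y D_i$ (so that automatically $\det \Delta_i \in \mathbb{J}$), and post-multiply $[X, Y]\, M = [\Delta_0, \Delta_1]$ by $\text{Adj}(M)$ to get $[\Delta_0, \Delta_1]\,\text{Adj}(M) = \det(M)\,[X, Y]$. The right-hand side vanishes at every zero of $\det M$, in particular at $s_1,\ldots,s_t$, giving the required interpolation conditions. This direction is essentially a one-line calculation once the Vidyasagar framework is in place.

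For sufficiency, given $\Delta_0,\Delta_1$ with the stated properties, I would \emph{define} $[X, Y] := [\Delta_0, \Delta_1]\,\text{Adj}(M)\,/\,\det(M)$. The numerator lies in $\mathbb{H}^{m\times 2m}$; since every unstable zero $s_j$ of $\det M$ is simple and the hypothesis forces $[\Delta_0, \Delta_1]\,\text{Adj}(M)$ to vanish at each such $s_j$, these simple unstable poles all cancel, so $[X,Y]\in\mathbb{H}^{m\times 2m}$. A direct computation using $M\,\text{Adj}(M) = \det(M)\, I$ then yields $[X, Y]\, M = [\Delta_0, \Delta_1]$, i.e., $X N_i + Y D_i = \Delta_i$ with $\det \Delta_i \in \mathbb{J}$, so $K := Y^{-1} X$ simultaneously stabilizes $P_0$ and $P_1$ by the Vidyasagar criterion. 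The hypothesis $\det M(\infty)\neq 0$ is used to guarantee that the quotient is regular at infinity so that $K$ is proper.

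The main obstacle is really the sufficiency step, and it is subtle in three respects. First, I must argue carefully that $[\Delta_0,\Delta_1]\,\text{Adj}(M)/\det(M)$ has no unstable poles; this relies precisely on the simplicity of each $s_j$ (with higher-order zeros, matching conditions on derivatives of $[\Delta_0,\Delta_1]\,\text{Adj}(M)$ would be needed, foreshadowing the higher-order interpolation data in \eqref{interpolation}). Second, I must verify that the extracted $Y$ is invertible with proper inverse, which is where $\det M(\infty)\neq 0$ is essential. Third, I should check that the resulting pair $(X,Y)$ can be taken left coprime so that $K=Y^{-1}X$ is a bona fide representation of a stabilizing compensator; if it is not, a common left factor can be divided out without affecting $K$ or the unimodularity of $\Delta_i$.
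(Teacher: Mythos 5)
Your proposal is correct and follows essentially the same route as the paper: the same coprime-factor stabilization criterion $N_cN_i+D_cD_i=\Delta_i$ with $\det\Delta_i\in\mathbb{J}$, the same matrix identity $[N_c,\,D_c]\,M=[\Delta_0,\,\Delta_1]$, and the same inversion via $\text{Adj}(M)/\det(M)$ with the vanishing condition at the simple unstable zeros ensuring cancellation. You spell out the two directions and the properness/coprimeness caveats more explicitly than the paper does, but the underlying argument is identical.
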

\begin{proof}
The proof of this proposition is based on the work of BK Ghosh \cite{Ghosh3}. Let us represent the compensator as
	\begin{equation}
	    K(s)=D_c(s)^{-1}N_c(s) 
	\end{equation}
	where $N_c(s) \in \mathbb{H}^{m \times m}, D_c(s) \in \mathbb{H}^{m \times m}, N_c(s), D_c(s)$ are coprime. Then the compensator stabilizes $P_0(s),P_1(s)$ if and only if
 \begin{equation}
     \quad N_c(s)N_i(s) +D_c(s)D_i(s) =\Delta_i(s)
 \end{equation}
	for some $\Delta_i(s) \in \mathbb{H}^{m \times m}$, $\operatorname{det} \Delta_i(s)\in \mathbb{J}, i=0, 1$ respectively \cite{Ghosh3}. 
	We can write this in matrix form as
	\begin{equation}
	    \begin{bmatrix}
	        N_c&D_c
	    \end{bmatrix}
		\begin{bmatrix}
		    N_0(s) & N_1(s) \\
		D_0(s) & D_1(s)
		\end{bmatrix}
	=\begin{bmatrix}
		\Delta_0(s) ~\Delta_1(s)
	\end{bmatrix},
	\end{equation}
   and  then
	 \begin{equation}
	     \begin{bmatrix}
	        N_c&D_c
	    \end{bmatrix}=\begin{bmatrix}
		\Delta_0(s) ~\Delta_1(s)
	\end{bmatrix}\frac{\text{Adj}(M(s))}{\det(M(s))}.
	 \end{equation}
	 In order to ensure that $N_c(s) \in \mathbb{H}^{m \times m}, D_c(s) \in \mathbb{H}^{m \times m}$, it is necessary and sufficient that 
	\begin{equation}
		\label{matrx_cond}
			[\begin{matrix}
			\Delta_0(s)~\Delta_1(s)
		\end{matrix}]\text{Adj}(M(s))=\mathbf{0}		
	\end{equation}
at $s_{1},s_2$, $\cdots, s_{t}$, where $s_{1},s_2$, $\cdots, s_{t}$  are the simple zeros of $\det(M)$ in $\mathbb{C_+}$.
\end{proof}

Assume generically that $\text{Adj}(M(s_i))$ are of rank 1 at $s_{1},s_2$, $\cdots, s_{t}$, If $\text{Adj}(M(s_i))$ are spanned by a column vector $\mathbf{r_i}=[\mathbf{r_{i1}},\mathbf{r_{i2}}]'$, then
\begin{equation}\label{delta}
    \Delta_0(s_i)\mathbf{r_{i1}}'+\Delta_1(s_i)\mathbf{r_{i2}}'=\mathbf{0}
\end{equation}
for $i=0, \cdots, t$. From equation \eqref{delta}, we can derive $(\Delta_0(s_i))^{-1}\Delta_1(s_i)$.

Now we consider the more general case, suppose
\begin{equation}
    P_\lambda(s)=N_\lambda(s)D_\lambda(s)^{-1}
\end{equation}
where $\lambda\in[0,1]$ and 
\begin{equation}
    N_\lambda=\lambda N_1+(1-\lambda)N_0
\end{equation}
\begin{equation}
    D_\lambda=\lambda D_1+(1-\lambda)D_0
\end{equation}

\begin{proposition}

If $(\Delta_0)^{-1}\Delta_1$ is diagonalizable at $s\in \mathbb{C}_+$, then the set of plants $P_\lambda(s)$ for $\lambda\in[0,1]$ can be simultaneously stabilized by a proper compensator if and only if there exist $\Delta_i(s) \in \mathbb{H}^{m \times m}$, with $\operatorname{det} \Delta_i(s) \in \mathbb{J}$ for $i=0, 1$, satisfying the condition of Proposition 1, along with the additional condition that the eigenvalues of $\Delta_0(s)^{-1}\Delta_1(s)$ do not intersect the nonpositive real axis including infinity  at any point in $\mathbb{C}_+$.

\end{proposition}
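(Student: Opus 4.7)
The strategy is to reduce simultaneous stabilization of the whole family to the Proposition~1 setup at every $\lambda\in[0,1]$ at once, exploiting the fact that $\Delta_\lambda$ inherits the affine structure of $(N_\lambda,D_\lambda)$, and then to translate the resulting determinantal non-vanishing condition into a spectral statement via the diagonalizability hypothesis.

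First I would invoke the argument in the proof of Proposition~1 at each fixed $\lambda$: for a coprime proper compensator $K(s)=D_c(s)^{-1}N_c(s)$, stabilization of $P_\lambda$ is equivalent to $\Delta_\lambda(s):=N_c(s)N_\lambda(s)+D_c(s)D_\lambda(s)$ having $\det\Delta_\lambda\in\mathbb{J}$. Since $N_\lambda$ and $D_\lambda$ are affine in $\lambda$, so is $\Delta_\lambda$:
\begin{equation*}
\Delta_\lambda(s)=\lambda\,\Delta_1(s)+(1-\lambda)\,\Delta_0(s),
\end{equation*}
where $\Delta_i=N_cN_i+D_cD_i$ are exactly the matrices appearing in Proposition~1. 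Consequently $K$ simultaneously stabilizes the entire family iff $(\Delta_0,\Delta_1)$ satisfies the Proposition~1 condition (the endpoints $\lambda=0,1$) together with the extra requirement $\det\bigl(\lambda\Delta_1(s)+(1-\lambda)\Delta_0(s)\bigr)\neq 0$ for all $\lambda\in[0,1]$ and all $s\in\mathbb{C}_+$.

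Next I would recast this non-vanishing as a spectral condition on $M(s):=\Delta_0(s)^{-1}\Delta_1(s)$, which is well-defined on $\mathbb{C}_+$ because $\det\Delta_0\in\mathbb{J}$. Factoring
\begin{equation*}
\lambda\Delta_1(s)+(1-\lambda)\Delta_0(s)=\Delta_0(s)\bigl((1-\lambda)I+\lambda M(s)\bigr)
\end{equation*}
and using diagonalizability of $M(s)$ at each $s\in\mathbb{C}_+$, the determinant factors into scalars
\begin{equation*}
\det\bigl((1-\lambda)I+\lambda M(s)\bigr)=\prod_{i=1}^{m}\bigl((1-\lambda)+\lambda\mu_i(s)\bigr),
\end{equation*}
where $\mu_i(s)$ are the eigenvalues of $M(s)$. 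A direct check shows $(1-\lambda)+\lambda\mu=0$ has a solution $\lambda\in[0,1]$ iff $\mu\in(-\infty,0]$; interpreting $\mu=\infty$ on the Riemann sphere as the degenerate case $\lambda=0$, the non-vanishing for all $\lambda\in[0,1]$ is equivalent to the spectrum of $M(s)$ avoiding $(-\infty,0]\cup\{\infty\}$ at every $s\in\mathbb{C}_+$, which is precisely the additional condition in the proposition.

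Both directions of the equivalence then follow: forward, by defining $\Delta_i$ from a stabilizing $K$ as above; converse, by reconstructing $K$ from $(\Delta_0,\Delta_1)$ via the adjoint formula in the proof of Proposition~1, where the Proposition~1 zero-interpolation still guarantees $N_c,D_c\in\mathbb{H}^{m\times m}$ while the eigenvalue condition now guarantees $\det\Delta_\lambda\in\mathbb{J}$ at every intermediate $\lambda$ as well. I anticipate the main care point to be consistent bookkeeping at the boundary values $\lambda\in\{0,1\}$ and $s=\infty$ so that ``$(-\infty,0]\cup\{\infty\}$'' meshes with the Proposition~1 conditions (namely $\mu=0$ corresponds to $\det\Delta_1\notin\mathbb{J}$ and $\mu=\infty$ to $\det\Delta_0\notin\mathbb{J}$); the diagonalizability assumption itself plays only a cosmetic role, since the same factorization holds with algebraic multiplicities via a Schur or Jordan decomposition.
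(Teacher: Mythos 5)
Your proof follows essentially the same route as the paper's: reduce simultaneous stabilization of the family to the non-vanishing of $\det\bigl(\lambda\Delta_1(s)+(1-\lambda)\Delta_0(s)\bigr)$ on $\mathbb{C}_+$ via the affine dependence of $\Delta_\lambda$ on $\lambda$, then convert this to the requirement that the eigenvalues of $\Delta_0(s)^{-1}\Delta_1(s)$ avoid $1-1/\lambda$ for $\lambda\in(0,1)$, i.e., the set $(-\infty,0]\cup\{\infty\}$ once the endpoint conditions $\det\Delta_i\in\mathbb{J}$ are folded in. Your extra remarks on the boundary bookkeeping and on diagonalizability being inessential are correct but do not constitute a different argument.
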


\begin{proof}
    Suppose $K(s)$ is the required compensator. To stabilize the plants $P_0(s),P_1(s)$ simultaneously, a necessary and sufficient condition is given by Proposition 1. Additionally, $K(s)$ simultaneously stabilizes every other plants $P_\lambda(s)$ for $\lambda \in (0,1) $ if and only if there exist $\Delta_\lambda(s) \in \mathbb{H}^{m \times m}$, $\operatorname{det} \Delta_\lambda(s) \in \mathbb{J}$ such that
    \begin{equation}
        \quad N_c(s)N_\lambda(s) +D_c(s)D_\lambda(s) =\Delta_\lambda(s), \lambda\in (0,1).
    \end{equation}
    By calculation,
    \begin{equation}
        \lambda\Delta_{1}(s)+(1-\lambda)\Delta_{0}(s)=\Delta_{\lambda}(s).
    \end{equation}

Since $\Delta_0(s) \in \mathbb{H}^{m \times m}$ and $\Delta_1(s) \in \mathbb{H}^{m \times m}$, we have  $\Delta_\lambda(s) \in \mathbb{H}^{m \times m},\lambda\in(0,1)$. In order to ensure $\operatorname{det} \Delta_\lambda(s) \in \mathbb{J}$, we need 
\begin{equation}\label{eigenvalues}
    \operatorname{det} (\lambda\Delta_{1}(s)+(1-\lambda)\Delta_{0}(s))\neq 0, \lambda\in(0,1).
\end{equation}

Note that over the complex numbers $\mathbb{C}$, almost every matrix is diagonalizable. Here we suppose $(\Delta_0)^{-1}\Delta_1$ is diagonalizable at $s\in \mathbb{C}_+$ and suppose $\alpha_1(s),\alpha_2(s),\cdots,\alpha_m(s)$ are its eigenvalues, then  \eqref{eigenvalues} means $\alpha_i(s)\neq (1-1/\lambda)$ for all $\lambda\in(0,1)$
at $s_i\in \mathbb{C}_+$. Also due to $\operatorname{det}(\Delta_i)\in\mathbb{J}$, we can conclude that to ensure simultaneous stabilization, the eigenvalues of $\Delta_0(s)^{-1}\Delta_1(s)$ can not  intersect the nonpositive real axis including infinity  at $s_i\in \mathbb{C}_+$. 
\end{proof}

Next, we reformulate the MIMO simultaneous stabilization problem as multivariable analytic interpolation problem.

From Proposition 1, we can get the interpolation constraints
\begin{equation}
    \Delta_0(s_i)^{-1}\Delta_1(s_i)=M_i,\quad i=1,\cdots,t.
\end{equation}
Let 
\begin{equation}
   F_1(s)=(\Delta_0(s)^{-1}\Delta_1(s))^{1/2}
\end{equation}
be the square-root of $\Delta_0(s)^{-1}\Delta_1(s)$,  which means $F_1(s)$ need satisfy
\begin{equation}
    F_1(s_i)=M_i^{1/2},\quad i=0,\cdots,t.
\end{equation}
Using the M{\"o}bius transformation defined by
\begin{equation}\label{Mobius}
    z=\frac{1-s}{1+s}.
\end{equation}
which can map $\mathbb{C_+}$ into the interior of the unit disc, we then set
\begin{equation}
	F(z):=F_1((1-z)(1+z)^{-1})
\end{equation}
which is obviously analytic in $\mathbb{D}$. The following proposition is straightforward.
\begin{proposition}
    If 
    \begin{equation}
        M_i^{1/2}+(M_i^{1/2})^{*}>0,i=0,\cdots,t,
    \end{equation}
    the simultaneous stabilization problem \eqref{MIMO} is simplified to identifying a Carath\'eodory function $F(z)$ such that  the interpolation constraints 
    \begin{equation}
        F(z_i)=M_i^{1/2},i=0,\cdots,t
    \end{equation}
    is satisfied, where $z_i=(1-s_i)/(1+s_i)$. This is a matrix case analytic interpolation problem \eqref{interpolation} when $m=t,n_0=n_1=\cdots=n_m=0$. 
\end{proposition}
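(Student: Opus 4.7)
The plan is to unwind the chain of substitutions and verify that each of them is valid under the hypotheses of Propositions 1 and 2, so that the original simultaneous stabilization problem is put in one-to-one correspondence with a matrix Carath\'eodory interpolation problem of the form \eqref{F+F*}--\eqref{interpolation}.

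First, I would read off the interpolation data at the right-half-plane zeros $s_1,\dots,s_t$ of $\det M(s)$. From Proposition 1, any stabilizing compensator corresponds to a pair $(\Delta_0,\Delta_1)$ satisfying \eqref{matrx_cond}, and the rank-$1$ computation in \eqref{delta} pins down the matrix ratio at each $s_i$, giving $\Delta_0(s_i)^{-1}\Delta_1(s_i)=M_i$ with $M_i$ determined by the right nullspace vector of $\mathrm{Adj}(M(s_i))$. This is the step that produces the finite set of interpolation values.

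Second, I would justify that $F_1(s)=(\Delta_0(s)^{-1}\Delta_1(s))^{1/2}$ is well-defined and analytic throughout $\mathbb{C}_+$. Here $\Delta_0,\Delta_1\in\mathbb{H}^{m\times m}$ and $\det\Delta_0\in\mathbb{J}$ imply that $\Delta_0^{-1}\Delta_1$ is already analytic in $\mathbb{C}_+$, so the only issue is selecting an analytic branch of the matrix square root. This is precisely where Proposition 2 is used: its eigenvalue hypothesis states that the eigenvalues of $\Delta_0(s)^{-1}\Delta_1(s)$ never touch the nonpositive real axis (including $\infty$) for $s\in\mathbb{C}_+$, so the principal branch of $\sqrt{\cdot}$ applied via the holomorphic functional calculus (or equivalently a Cauchy integral around a contour encircling the spectrum and avoiding $(-\infty,0]$) yields an analytic $F_1$. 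I expect this step to be the only real obstacle, since the other ingredients are algebraic.

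Third, I would transfer everything to the disc via the M\"obius map \eqref{Mobius}. Since $z=(1-s)/(1+s)$ is a conformal bijection from $\mathbb{C}_+$ onto $\mathbb{D}$ with inverse $s=(1-z)/(1+z)$, the composition $F(z)=F_1((1-z)/(1+z))$ is analytic on $\mathbb{D}$, and the images $z_i=(1-s_i)/(1+s_i)$ are distinct points in $\mathbb{D}$. Evaluating gives the interpolation constraints $F(z_i)=M_i^{1/2}$, which are exactly \eqref{interpolation} in the confluent-free case $n_0=n_1=\cdots=n_m=0$ with $m=t$.

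Finally, I would verify that the Carath\'eodory positivity requirement \eqref{F+F*} is consistent with the data: at each interpolation node, $F(z_i)+F(z_i)^*=M_i^{1/2}+(M_i^{1/2})^*>0$ by assumption, which is the necessary condition for a Carath\'eodory interpolant with these values to exist. Thus the problem is reduced to the analytic interpolation problem solved in Section~\ref{sec:CEE}, completing the reformulation.
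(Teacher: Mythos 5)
The paper offers no argument here at all---it simply declares the proposition ``straightforward'' after setting up the square root and the M\"obius map---so your sketch is already more detailed than the source. However, your proposal proves the wrong direction. Every step you describe runs from a stabilizing compensator to interpolation data: you extract the values $M_i$ from \eqref{delta}, use the eigenvalue hypothesis of Proposition~2 to define the square root $F_1$, push it to the disc, and check that the Hermitian parts at the nodes are positive. What the proposition actually asserts is the reduction in the useful direction: that \emph{solving} the Carath\'eodory interpolation problem $F(z_i)=M_i^{1/2}$ \emph{produces} a simultaneously stabilizing compensator. For that you must start from an arbitrary Carath\'eodory interpolant $F$, set $F_1(s)=F((1-s)(1+s)^{-1})$ and $\Delta_0^{-1}\Delta_1:=F_1^2$, and then verify the hypotheses of Propositions~1 and~2.

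The key observation you are missing---and the entire reason the square root is introduced rather than interpolating $M_i$ directly---is that the Carath\'eodory condition \eqref{F+F*} automatically delivers the eigenvalue condition of Proposition~2: if $F_1(s)+F_1(s)^*>0$ then the spectrum of $F_1(s)$ lies in its numerical range, hence in the open right half-plane, and therefore the spectrum of $F_1(s)^2=\Delta_0(s)^{-1}\Delta_1(s)$ cannot meet $(-\infty,0]\cup\{\infty\}$ for any $s\in\mathbb{C}_+$. Your step~2 instead \emph{assumes} Proposition~2's eigenvalue condition in order to define the square root, which inverts the logic. In addition, a complete proof of the reduction would have to exhibit actual factors $\Delta_0,\Delta_1\in\mathbb{H}^{m\times m}$ with $\det\Delta_i\in\mathbb{J}$ realizing the ratio $F_1^2$ and confirm that the interpolation conditions $F_1(s_i)^2=M_i$ reproduce \eqref{matrx_cond}; neither you nor the paper addresses this, but at minimum the spectral-mapping argument above should be stated, since without it the claimed equivalence between \eqref{MIMO} and the interpolation problem \eqref{interpolation} does not follow.
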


 After obtaining the solution for $F(z)$, we can apply the following transformations to obtain the compensator $K(s)$.
Denote
\begin{equation}
    \frac{Adj(M(s))}{\det(M(s))}=\begin{bmatrix}
        m_{11}(s) & m_{12}(s) \\
	m_{21}(s) & m_{22}(s)
    \end{bmatrix},
\end{equation}
then
\begin{equation}
    F_1(s)=F((1-s)(1+s)^{-1})
\end{equation}
\begin{equation}
    K(s)=D_c^{-1}N_c=(m_{12}+F_1^2m_{22})^{-1}(m_{11}+F_1^2m_{21})
\end{equation}

\section{The analytic interpolation problem}\label{sec:CEE}
This section illustrates the approach to address the analytic interpolation problem \eqref{interpolation} employing the Covariance Extension Equation \cite{CLccdc,CLtac,CLcdc}. Without loss of generality, we set $z_{0}=0$ and $W_{0}=\frac{1}{2}I$. Consequently, $F(z)$ can be expressed as:
 \begin{equation}
\label{ }
F(z)=\tfrac12 I + zH(I-zF)^{-1}G,
\end{equation}
where matrices $H\in\mathbb{R}^{\ell\times\ell n}$, $F\in\mathbb{R}^{\ell n\times\ell n}$, $G\in\mathbb{R}^{\ell n\times\ell}$, and all eigenvalues of matrix $F$ reside in $\mathbb{D}$, $(H,F)$ forms an observable pair.

By defining $\Phi_+(z):=F(z^{-1})$, we obtain:
\begin{equation}
\label{ }
\Phi_+(z)=\tfrac12 I + H(zI-F)^{-1}G,
\end{equation}
which has its poles within the unit disc $\mathbb{D}$. Based on \eqref{F+F*}, it follows that:
\begin{equation}
\Phi_+(e^{i\theta})+\Phi_+(e^{-i\theta})'>0, \quad -\pi\leq \theta\leq\pi ,
\end{equation}
and thus $\Phi_+(z)$ is positive real \cite[Chapter 6]{LPbook}. The problem is then reduced to identifying a rational positive real function $\Phi_+(z)$ of degree at most $\ell n$ that meets the interpolation constraints \eqref{interpolation}. By a coordinate transformation $(H,F,G)\to(HT^{-1},TFT^{-1},TG)$, we can select $(H,F)$ in the observer canonical form
\begin{equation}
    H=\text{diag}(h_{t_1},h_{t_2},\dots,h_{t_\ell}) \in \mathbb{R}^{\ell\times n\ell}
\end{equation}
with $h_\nu:=(1,0,\dots,0)\in\mathbb{R}^\nu$, and
\begin{equation}
\label{F}
F=J-AH \in\mathbb{R}^{n\ell\times n\ell}
\end{equation}
where $A\in\mathbb{R}^{n\ell\times \ell}$, $J:=\text{diag}(J_{t_1},J_{t_2},\dots, J_{t_\ell})$ with $J_\nu$ the $\nu\times\nu$ shift matrix
\begin{equation}
J_\nu =\begin{bmatrix}0&1&0&\dots&0\\0&0&1&\dots&0\\\vdots&\vdots&\vdots&\ddots&0\\
0&0&0&\dots&1\\0&0&0&\dots&0\end{bmatrix}.
\end{equation}
Here $t_1,t_2,\dots,t_\ell$ are the {\em observability indices\/} of $\Phi_+(z)$, and 
\begin{equation}
\label{tsum}
t_1+t_2+\dots+t_\ell=n\ell.
\end{equation}
Next define $\Pi(z):=\text{diag}(\pi_{t_1}(z),\pi_{t_2}(z),\dots,\pi_{t_\ell}(z))$, where $\pi_\nu(z)=(z^{\nu-1},\dots,z,1)$, and the $\ell\times\ell$ matrix polynomial
\begin{equation}
\label{A(z)}
A(z)=D(z) +\Pi(z)A,
\end{equation}
where 
\begin{equation}
\label{D(z)}
D(z):=\text{diag}(z^{t_1},z^{t_2},\dots, z^{t_\ell}).
\end{equation}

From Lemma 1 in \cite{CLcdc},
\begin{equation}\label{lemma}
    H(zI-F)^{-1}=A(z)^{-1}\Pi(z),
\end{equation}
and consequently
 \begin{equation}
\label{AinvB}
\Phi_+(z)=\tfrac12 A(z)^{-1}B(z),
\end{equation}
where
\begin{displaymath}
B(z)=D(z) +\Pi(z)B
\end{displaymath}
with
\begin{equation}
\label{AG2B}
B=A+2G.
\end{equation}

Let $V(z)$ denote the minimum-phase spectral factor of
\begin{equation}
V(z)V(z^{-1})'=\Phi(z) := \Phi_+(z) + \Phi_+(z^{-1})' .
\end{equation}
From \cite[Chapter 6]{LPbook}, $V(z)$ has a realization of the form
\begin{equation}
V(z)=H(zI-F)^{-1}K + R,
\end{equation}
and following \eqref{lemma}, it can be expressed as
\begin{equation}
\label{ }
V(z)=A(z)^{-1}\Sigma(z)R,
\end{equation}
where
\begin{equation}
\label{Sigma(z)}
\Sigma(z)=D(z)+\Pi(z)\Sigma 
\end{equation}
with 
\begin{equation}
\label{Sigma}
\Sigma = A+KR^{-1}. 
\end{equation}

From stochastic realization theory \cite[Chapter 6]{LPbook} we obtain 
\begin{align}
  K  & =(G-FPH')(R')^{-1}  \label{K}\\
  RR'  &  = I-HPH' \label{R}
\end{align}
where $P$ is the unique minimum solution of the algebraic Riccati equation
\begin{equation}
\label{Riccati}
P=FPF' + (G-FPH')(I-HPH')^{-1}(G-FPH')'.
\end{equation}
From \eqref{F}, \eqref{Sigma}, \eqref{K} and \eqref{R}, \eqref{Riccati} can be written
\begin{equation}
\label{AREmod}
P=\Gamma (P-PH'HP)\Gamma' +GG' .
\end{equation}
where
\begin{equation}
\label{Gamma}
\Gamma=J-\Sigma H.
\end{equation}

The article \cite{CLtac} demonstrates that $G$ can be expressed as $u + U(\Sigma + \Gamma PH')$, with $u$ and $U$ being fully determined by the interpolation data \eqref{interpolation}. The  analytic interpolation problem involves determining the values of $(A,B)$ based on the given interpolation data \eqref{interpolation} and a specific matrix polynomial $\Sigma(z)$.   

In \cite{CLtac} we conclude that the conditions for the existence of solutions to this problem  only rely on the interpolation data. If the solution exists, it is also shown in \cite{CLtac}  that the {\em Covariance Extension Equation (CEE)}
\begin{subequations}\label{PgCCE}
	\begin{equation} \label{CEE}
		P=\Gamma (P-PH'HP)\Gamma' +G(P)G(P)' 
	\end{equation}
	 with
	\begin{equation}\label{g(P)} 
		G(P)= u +U(\Sigma + \Gamma PH') ,
	\end{equation}
\end{subequations}
 has a unique symmeric solution $P\geq 0$ with the property that $HPH'<1$. Additionally, for every $\Sigma$, there exists a unique solution to the analytic interpolation problem, which is expressed as follows:

\begin{subequations}\label{Psigma2ab}
	\begin{equation}\label{a}
		A=(I-U)(\Gamma PH'+\Sigma)-u
	\end{equation}
	\begin{equation}\label{b}
		B=(I+U)(\Gamma PH'+\Sigma)+u
	\end{equation}
	\begin{equation}\label{rho}
		R=(I-HPH')^{\frac{1}{2}} .
	\end{equation}
\end{subequations}
 The equation \eqref{PgCCE} can be solved using a homotopy continuation approach, as described in \cite{CLtac}. 
 Distinct selections of matrix polynomial $\Sigma(z)$ yield different feasible solutions.

\section{ Computational examples}\label{sec:applications}

\subsection{Example 1}
First we consider a simple MIMO simultaneous stabilization problem with
\begin{equation}
\begin{split}
     N_0&=\left[\begin{matrix}
	1&2\\
	3&1
\end{matrix}\right]\quad D_0=\left[\begin{matrix}
\frac{s-2}{s+6}&1\\
3&\frac{s-2.7}{s+10}
\end{matrix}\right]\\
N_1&=\left[\begin{matrix}
	1&2\\
	4&3
\end{matrix}\right]\quad D_1=\left[\begin{matrix}
	\frac{s-3.2}{s+2.2}&1\\
	3&\frac{s-7.7}{s+1}
\end{matrix}\right]
\end{split}   
\end{equation}

There exist poles that are not stable when the parameter $\lambda$ changes within the range $[0,1]$. In order to enhance the visibility of the poles, we utilize the transformation defined by equation \eqref{trans}, 
\begin{equation}\label{trans}
	z=\frac{1+s}{1-s}.
\end{equation}
This transformation effectively maps left half plane to the interior of the unit circle, while also converting the right half plane to the exterior of the unit circle. A stable system is characterized by having all poles located within the unit circle. 
Fig.~\ref{beforestabilization1} displays the whole set of poles of $P_\lambda$ as $\lambda$ ranges from 0 to 1 in increments of 0.1. 
\begin{figure}[htp]
	\centering
	\includegraphics[width=1\linewidth]{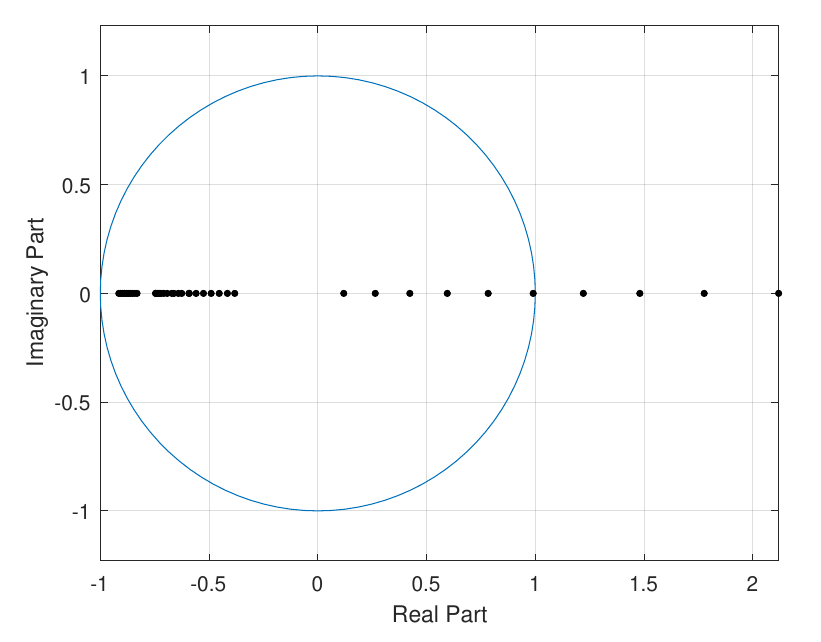}
	\caption{The poles of $P_\lambda$ before stabilization}
	\label{beforestabilization1}
\end{figure}

Based on the information provided in  Fig.~\ref{beforestabilization1}, it is evident that some systems lack stability.  Applying the technique described in this paper, we first observe that $\det(M(s))$ has two zeros at $s_0=12.24$ and $s_1=1.494$ in $\mathbb{C_+}$. In order to achieve system stability, it is necessary to satisfy the interpolation criteria \eqref{delta}, which result in the following equation: 
\begin{equation}
    (\Delta_0(s_0))^{-1}\Delta_1(s_0)=M_0,~(\Delta_0(s_1))^{-1}\Delta_1(s_1)=M_1.
\end{equation}

 Here $M_0+M_0^*>0, M_1+M_1^*>0$, and by using the M{\"o}bius transformation \eqref{Mobius}, the simultaneous stabilization problem then becomes an analytic interpolation problem with $n_0=n_1=1$. The interpolation constraints are
\begin{equation}
\begin{split}
    (\Delta_0^{-1}\Delta_1)(\frac{1-s_0}{1+s_0})&=M_0\\
    (\Delta_0^{-1}\Delta_1)(\frac{1-s_1}{1+s_1})&=M_1.
\end{split}
\end{equation}
According to the theory in \cite{CLtac}, solutions exist in this example. By choosing $\Sigma=[0.3~0;0~0.5]$, we get
\begin{equation}
    (\Delta_0(s))^{-1}\Delta_1(s)=\begin{bmatrix}
        1 & \frac{0.4571 s^2 + 38.42 s + 572.4}{s^2 + 35.22 s + 279.8} \\
        -1 & \frac{0.1414 s^2 + 38.5 s + 869.2}{s^2 + 35.22 s + 279.8} \\
    \end{bmatrix}.
\end{equation}
After stabilization, the locations of the poles of $P_\lambda,\lambda\in[0,1]$ are depicted in Figure~\ref{afterstabilization1}. As all the poles are situated within the open unit disc, it indicates that all feedback systems are stable.

\begin{figure}[htp]
	\centering
	\includegraphics[width=1\linewidth]{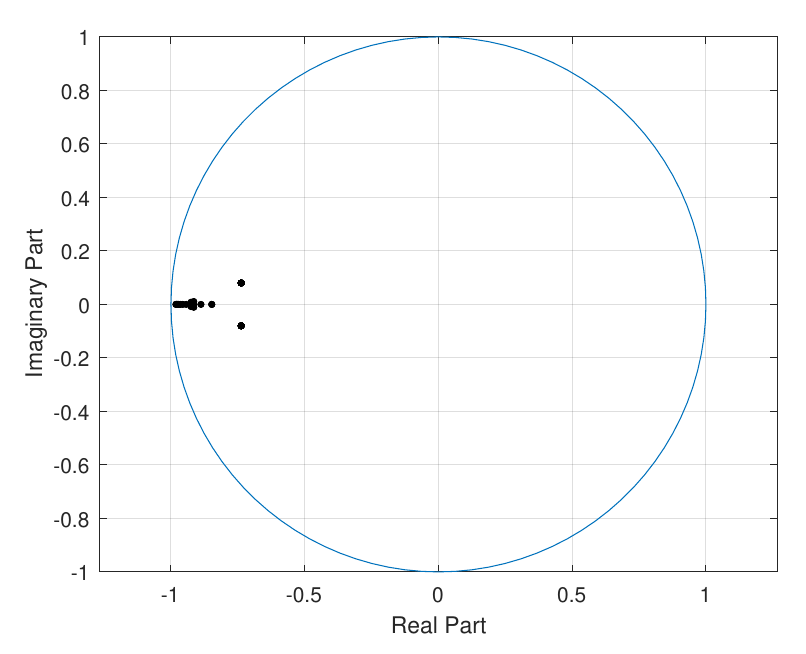}
	\caption{The poles of $P_\lambda$ after stabilization}
	\label{afterstabilization1}
\end{figure}

To demonstrate that different choices of $\Sigma$ produce different feasible solutions, we take $\Sigma$ to be
\begin{equation}
    \begin{split}
        a &= \begin{bmatrix} -0.1 & -0.9 \\ 0.4 & -0.6 \end{bmatrix}\qquad
b = \begin{bmatrix} 0.4 & 0.1 \\ 0.5 & 0.4 \end{bmatrix} \\
c &= \begin{bmatrix} 0.2 & 0.35 \\ 0.6 & 0.4 \end{bmatrix} ~~~\qquad
d = \begin{bmatrix} -0.8 & 0.1 \\ 0.6 & -0.2 \end{bmatrix} \\
e &= \begin{bmatrix} -0.65 & 0.22 \\ 0.8 & -0.2 \end{bmatrix} \qquad
f = \begin{bmatrix} 0.8 & -0.33 \\ 0.9 & 0.7 \end{bmatrix}.
    \end{split}
\end{equation}
respectively. Fig.~3 shows the corresponding results, indicating that the solution changes with different $\Sigma$.
\begin{figure}[htp]
	\centering
	\subfigure[]{
		\begin{minipage}[t]{0.5\linewidth}
			\centering
			\includegraphics[width=1\linewidth]{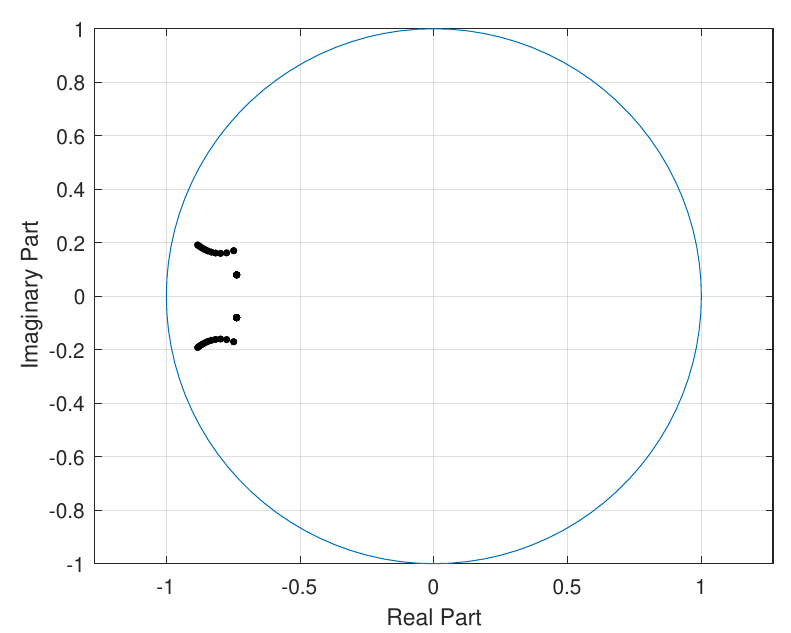}
		\end{minipage}%
	}%
	\subfigure[]{
		\begin{minipage}[t]{0.5\linewidth}
			\centering
			\includegraphics[width=1\linewidth]{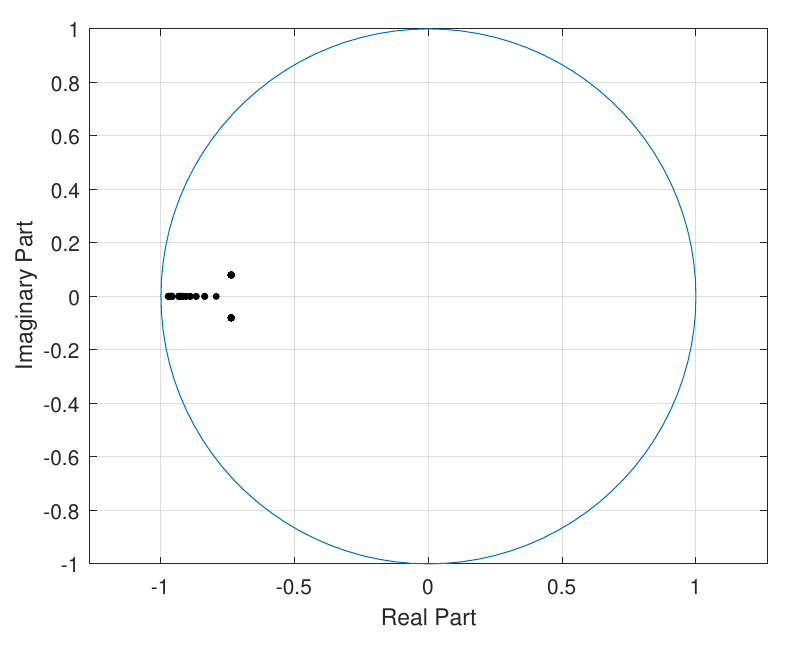}
		\end{minipage}%
	}%

	\subfigure[]{
		\begin{minipage}[t]{0.5\linewidth}
			\centering
			\includegraphics[width=1\linewidth]{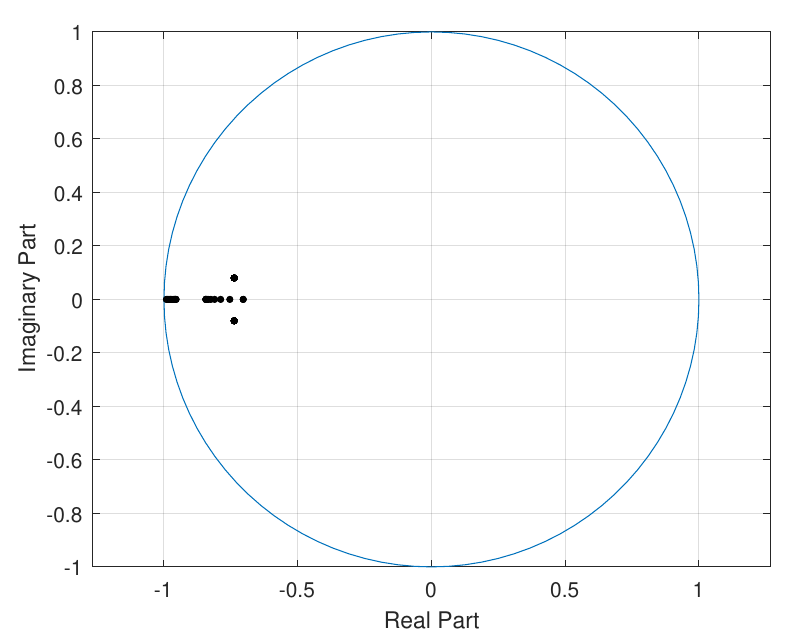}
		\end{minipage}
	}%
	\subfigure[]{
		\begin{minipage}[t]{0.5\linewidth}
			\centering
			\includegraphics[width=1\linewidth]{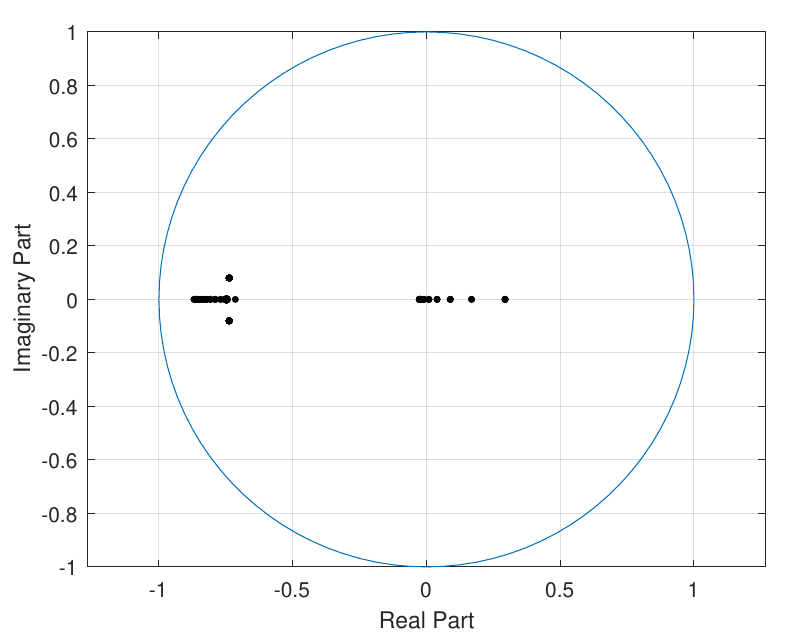}
		\end{minipage}
	}

		\subfigure[]{
		\begin{minipage}[t]{0.5\linewidth}
			\centering
			\includegraphics[width=1\linewidth]{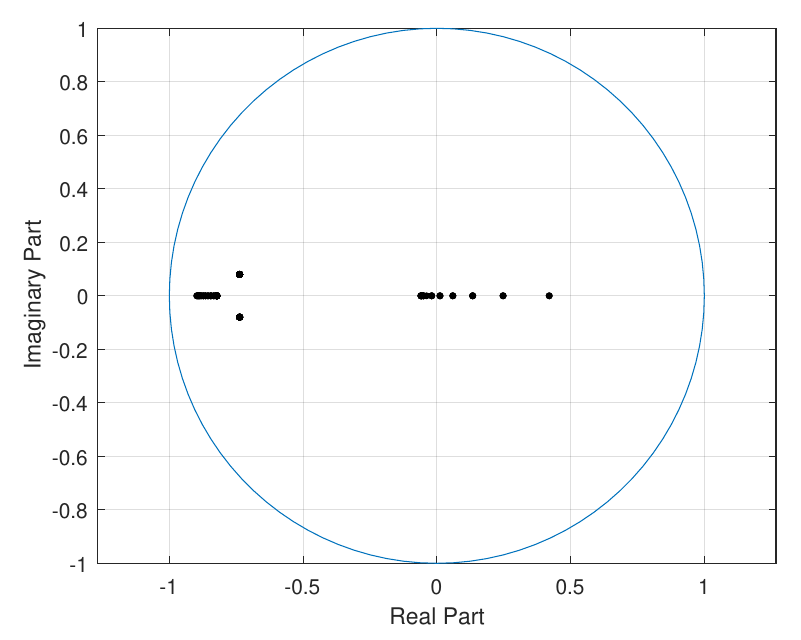}
		\end{minipage}
	}\subfigure[]{
		\begin{minipage}[t]{0.5\linewidth}
			\centering
			\includegraphics[width=1\linewidth]{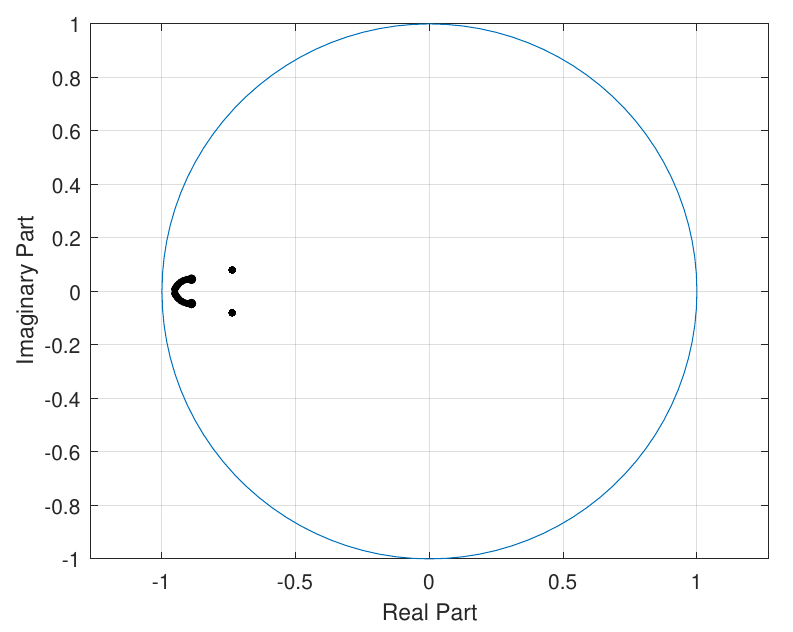}
		\end{minipage}
	}%
	\centering
	\label{sigma_zero}
	\caption{The poles of the stabilized system with diferent $\Sigma$}
\end{figure}
\subsection{Example 2}
Next we consider a more complex system which includes complex unstable zeros. Suppose
\begin{equation*}
   N_0=\frac{1}{z^2+2z+10}\left[\begin{matrix}
	3(z^2-2z+1)&5(z^2+2z+10)\\
	4(z^2+2z+10)&2(z-4)(z-2)
\end{matrix}\right] 
\end{equation*}
\begin{equation*}
    D_0=\frac{1}{z^2+2z+10}\left[\begin{matrix}
2(z-2)(z-3)&-(z^2+2z+10)\\
3(z^2+2z+10)&(z-2)^2
\end{matrix}\right]
\end{equation*}
and
$$
N_1=\frac{1}{z^2+2z+15}\left[\begin{matrix}
	(z-1)(z+1)&6(z^2+2z+15)\\
	7(z^2+2z+15)&(z-8)(z-1)
\end{matrix}\right]
$$
$$
D_1=\frac{1}{z^2+2z+15}\left[\begin{matrix}
	(z-6)(z+2)&2(z^2+2z+15)\\
	-(z^2+2z+15)&3(z-5)(z-9)
\end{matrix}\right]
$$

There are plenty of poles that are unstable as $\lambda$ changes within the range of 0 to 1. In Fig.~\ref{beforestabilization} we show the poles  of $P_\lambda$  as $\lambda$ varies from 0 to 1 at intervals of 0.1. Unlike the previous example, there are many complex poles in this example.

\begin{figure}[htp]
	\centering
	\includegraphics[width=1\linewidth]{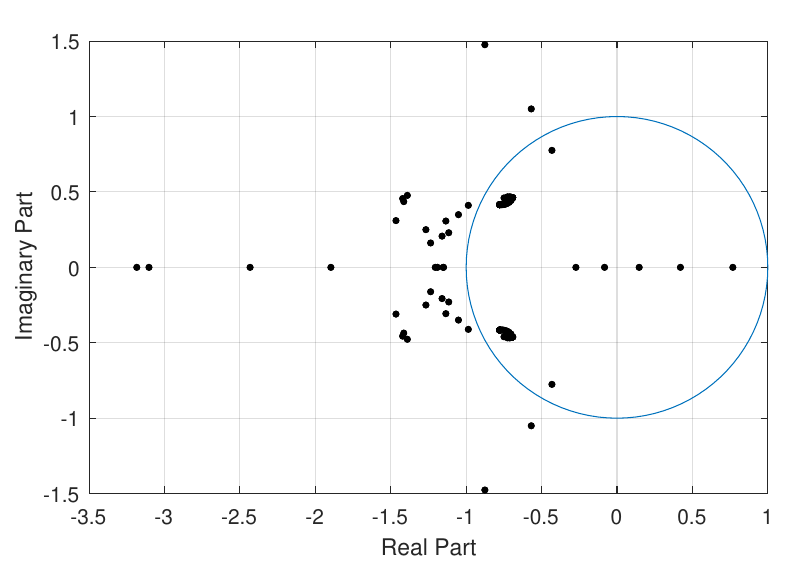}
	\caption{The poles of $P_\lambda$ before stabilization}
	\label{beforestabilization}
\end{figure}

By employing the approach outlined in this paper, we can observe that $\det(M(s))$ has three zeros at $s_0=0.2322$, $s_1=0.9862-3.5291i$, $s_2=0.9862+3.5291i$  in $\mathbb{C_+}$. In order to ensure the stability of the systems, we then need the interpolation conditions \eqref{delta}, which yield
\begin{equation}
    \begin{split}
        (\Delta_0(s_0))^{-1}&\Delta_1(s_0)=M_0,\\
        (\Delta_0(s_1))^{-1}\Delta_1(s_1)=M_1, &
(\Delta_0(s_2))^{-1}\Delta_1(s_2)=M_2,
    \end{split}
\end{equation}
using the M{\"o}bius transformation \eqref{Mobius}, and  since the  Hermitian  parts of $M_0^{1/2},M_1^{1/2},M_2^{1/2}$ are positive,
the problem is then reduced to  the analytic interpolation problem with the  interpolation conditions 
\begin{equation}
\begin{split}
    (\Delta_0^{-1}\Delta_1)^{1/2}(\frac{1-s_0}{1+s_0})&=(M_0)^{1/2}\\
    (\Delta_0^{-1}\Delta_1)^{1/2}(\frac{1-s_1}{1+s_1})&=(M_1)^{1/2}\\(\Delta_0^{-1}\Delta_1)^{1/2}(\frac{1-s_2}{1+s_2})&=(M_2)^{1/2}.
\end{split}
\end{equation}

Here we choose 
\begin{equation}
\Sigma = \begin{bmatrix} 0.4 & 0.2 \\ 0.3 & -0.5 \\ 0.8 & -0.1 \\ 0.6 & -0.2 \end{bmatrix},
\end{equation}
then we get the result
\begin{equation}
    (\Delta_0(s)^{-1}\Delta_1(s))^{1/2}=\begin{bmatrix}
    K_{11}&K_{12}\\
    K_{21}&K_{22}
\end{bmatrix}
\end{equation}
$$K_{11}=\frac{s^4 + 0.923 s^3 + 0.6752 s^2 + 0.1567 s + 0.02193}{s^4 + 0.3846 s^3 + 0.6211 s^2 + 0.1852 s + 0.0186}$$
$$K_{12}=\frac{-0.325 s^4 + 1.821 s^3 + 0.3929 s^2 - 0.0064 s - 0.0111}{s^4 + 0.3846 s^3 + 0.6211 s^2 + 0.1852 s + 0.0186}$$
$$K_{21}=\frac{-0.5581 s^4 + 0.9026 s^3 - 0.2138 s^2 - 0.01 s - 0.03345}{s^4 + 0.3846 s^3 + 0.6211 s^2 + 0.1852 s + 0.0186}$$
$$K_{22}=\frac{0.2714 s^4 + 3.214 s^3 + 1.429 s^2 + 0.1429 s + 0.017}{s^4 + 0.3846 s^3 + 0.6211 s^2 + 0.1852 s + 0.0186}$$

Fig.~\ref{afterstabilization} displays the poles of $P_\lambda,\lambda\in[0,1]$ after stabilization. All feedback systems are stable because all of the poles are in the open unit disc.

\begin{figure}[htp]
	\centering
	\includegraphics[width=1\linewidth]{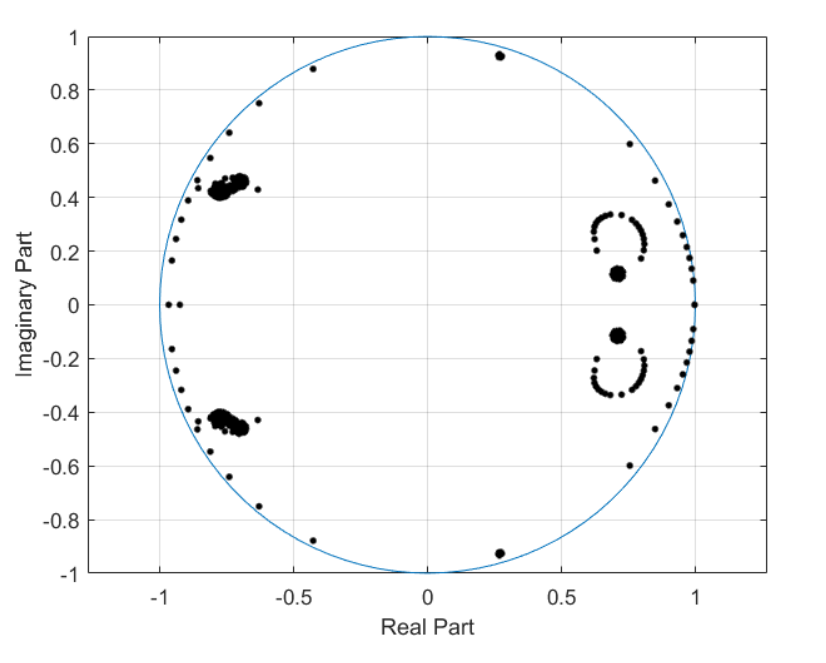}
	\caption{The poles of $P_\lambda$ after stabilization}
	\label{afterstabilization}
\end{figure}

\section{conclusion}\label{sec:conclusion}
In this study, we focus on the MIMO simultaneous stabilization issue and reframe it as an analytic interpolation problem. We resolve this problem by employing a Riccati-type algebraic matrix equation known as the Covariance Extension Equation. Furthermore, we present various solutions by selecting different matrix polynomials. In future research, we intend to incorporate derivative constraints.
\bibliographystyle{IEEEtran}

\end{document}